\theoremstyle{plain}
\newtheorem{theorem}{Theorem}
\newtheorem{lemma}{Lemma}
\theoremstyle{definition}
\newtheorem{definition}[theorem]{Definition}
\theoremstyle{remark}
\newtheorem{remark}[theorem]{Remark}
\author{Jean-Paul Allouche \\
CNRS, IMJ-PRG \\
Sorbonne Universit\'e \\
4 Place Jussieu \\
F-75252 Paris Cedex 05, France \\
{\tt jean-paul.allouche@imj-prg.fr}
\and
Guo-Niu Han \\
CNRS, IRMA \\
Universit\'e de Strasbourg \\
7 rue Ren\'e Descartes \\
F-67084 Strasbourg, France \\
{\tt guoniu.han@unistra.fr}
\and Harald Niederreiter \\
Johann Radon Institute \\
RICAM, Austrian Academy of Sciences \\
Altenberger Stra{\ss}e 69 \\
A-4040 Linz, Austria \\
{\tt ghnied@gmail.com}
}
\title{Perfect linear complexity profile and Apwenian sequences}
\date{ }
\begin{document}

\maketitle

\begin{abstract}
Sequences with {\em perfect linear complexity profile} were defined more than thirty years ago
in the study of measures of randomness for binary sequences. More recently {\em apwenian 
sequences}, first with values $\pm 1$ , then with values in $\{0, 1\}$, were introduced in the study 
of Hankel determinants of automatic sequences. We explain that these two families of sequences
are the same up to indexing, and give consequences and questions that this implies. We hope that
this will help gathering two distinct communities of researchers.

\medskip

\noindent
{\bf Keywords}: Perfect linear complexity profile, generalized Rueppel sequences, Apwenian 
sequences, continued fractions with partial quotients with bounded degree, automatic sequences.

\medskip

\noindent
{\bf MSC}: 11K45, 11K50, 11J70, 11B85, 11T71, 94A55.

\end{abstract}

\bigskip

One of the intense pleasures in mathematical research is to discover a link between two fields that
either did not seem immediately related or were studied from two distinct points of view unaware of 
each other. The first author, reading the paper \cite{Guo-Han-Wu}, saw a relation 
satisfied by the so-called ``$0,1$-apwenian sequences'' $(c_n)_{n \geq 0}$, namely
$$
c_0 = 1, \ \text{and} \ \forall n \geq 0, \ c_n \equiv c_{2n+1} + c_{2n+2} \ \bmod 2.
$$
This relation sounded familiar: he vaguely remembered a talk at SETA 98 where a similar relation
was discussed, but was not able to find a reference, though he had the name of the third 
author in mind. And indeed the latter indicated to him two papers of his with a similar 
relation \cite{Niederreiter1, Niederreiter2}. The second of these papers contains {\em inter alia} a 
simple proof of a result initially due to Wang and Massey \cite{Wang-Massey} which displays the 
following relation for the so-called ``PLCP sequences'' $(s_i)_{i \geq 1}$:
$$
s_1 = 1, \ \text{and} \  \forall i \geq 1, \ s_{2i+1} \equiv s_{2i} + s_i \ \bmod 2.
$$
Note that this relation already occurs in a 1977-paper of Baum and Sweet 
\cite[p.~574]{Baum-Sweet2}. As far as SETA 98 is concerned, the property above indeed occurs 
in \cite[Theorem~3]{Kohel-Ling-Xing}. 

\medskip

In this paper we propose to describe the links between PLCP sequences and apwenian sequences:  
unexpected connections arise from the two properties of $0,1$-sequences displayed above.

\section{PLCP sequences}

The construction of sequences with {\it almost perfect linear complexity profile} was motivated
by the search for pseudorandom sequences having reasonable properties of unpredictability
and randomness. We recall two definitions and some properties (see, e.g. \cite{Niederreiter2}).

\begin{definition}
A sequence $(s_n)_{n \geq 1}$ of elements in a field $F$ is called a $k$-th order {\em shift-register 
sequence} if there exist constants $a_0, a_1, \ldots, a_{k-1}$ in $F$ such that, for all $ i \geq 1$
$$
s_{i+k} + a_{k-1} s_{i+k-1} + \ldots + a_1 s_{i+1} + a_0 s_i = 0.
$$
\end{definition}

\begin{remark}
Shift-register sequences are also called {\em sequences satisfying a linear recurrence relation}.
Also note that $(s_n)_{n \geq 1}$  is a shift-register sequence if and only if the formal power
series $\sum s_n X^n$ is rational (i.e., can be obtained as the quotient of two polynomials).
\end{remark}

\begin{definition}
The {\em linear complexity} $L(n)$ of a sequence $(s_n)_{n \geq 1}$ of elements in a field $F$ is 
defined as the least $k$ such that $s_1, s_2, \ldots, s_n$ are the first $n$ terms of a $k$-th order
shift-register sequence. In the case where the first $n$ terms of $(s_n)$ are $0$, $L(n)$ is defined 
by $L(n)=0$. The sequence $(L(n))_{n \geq 1}$ is called the {\em linear complexity profile} of the 
sequence $(s_n)_{n \geq 1}$.
\end{definition}

\begin{remark}
One clearly has $0 \leq L(n) \leq n$ and $L(n) \leq L(n+1)$.
\end{remark}

Actually the linear complexity of a sequence $(s_n)_{n \geq 1}$ is related to the continued fraction
expansion of the formal Laurent series $\sum_{n \geq 1} s_n t^{-n}$. Recall that formal Laurent 
series can be expanded into continued fractions similarly to the real case, where ${\mathbb R}$
is replaced by $F((t^{-1}))$ and ${\mathbb N}$ by $F[t]$. To the best of our knowledge E. Artin 
(in his thesis) was the first author who defined and studied these continued fractions (see \cite{Artin}). 

\begin{theorem}[{\rm Theorem~1 in \cite{Niederreiter2}}]
Let $P_j/Q_j$ be the convergents of the series $\sum_{n \geq 1} s_n t^{-n}$ and $(L(n))_{n \geq 1}$
be the linear complexity profile of the sequence $(s_n)_{n \geq 1}$. Then
$$
L(n) = \deg Q_j
$$
where $j$ is the unique integer defined by
$$
\deg Q_{j-1} + \deg Q_j  \leq  n <  \deg Q_j + \deg Q_{j+1}.
$$
\end{theorem}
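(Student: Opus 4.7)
The plan is to translate the statement about linear complexity into a rational approximation problem for the formal Laurent series $S(t) = \sum_{n \geq 1} s_n t^{-n} \in F((t^{-1}))$, and then read off $L(n)$ from the continued fraction data. As a preliminary step I would verify that $L(n) \leq k$ holds if and only if there exist $Q, P \in F[t]$ with $\deg Q = k$ (one may take $Q$ monic) such that $|QS - P| \leq |t|^{-(n-k+1)}$ in the $t^{-1}$-adic absolute value. Indeed, if $Q(t) = t^k + a_{k-1}t^{k-1} + \cdots + a_0$, then the coefficient of $t^{-i}$ in $QS$ is exactly $\sum_{r=0}^{k} q_r\, s_{i+r}$, and requiring these to vanish for $1 \leq i \leq n-k$ is the shift-register recurrence condition of the definition; shorter recurrences are incorporated by multiplying $Q$ by a power of $t$.

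The heart of the proof is the following lemma: for any integer $k \geq 0$, if $m$ is the unique index with $\deg Q_m \leq k < \deg Q_{m+1}$, then
\[
\min\{\,|QS-P|:Q,P\in F[t],\ \deg Q=k\,\} = |t|^{k-\deg Q_m-\deg Q_{m+1}},
\]
attained by $Q = u Q_m$, $P = u P_m$ with $u\in F[t]$ of degree $k-\deg Q_m$. I would prove this by polynomial division: write any admissible $Q$ as $Q = u Q_m + R$ with $\deg R < \deg Q_m$ (forcing $\deg u = k-\deg Q_m$), so that
\[
QS - P = u(Q_m S - P_m) + (RS - P')
\]
for a suitable polynomial $P'$. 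The standard convergent identity $|Q_m S - P_m| = |t|^{-\deg Q_{m+1}}$ yields $|u(Q_m S - P_m)| = |t|^{k-\deg Q_m-\deg Q_{m+1}}$, which is strictly less than $|t|^{-\deg Q_m}$ because $k < \deg Q_{m+1}$. An induction on $m$ shows that for $R \neq 0$, choosing $P'$ optimally gives $|RS - P'| \geq |t|^{-\deg Q_m}$; the non-archimedean ultrametric inequality then forces $|QS - P|$ to equal the larger of the two summands, so that the overall minimum is realized precisely when $R = 0$.

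The theorem follows immediately. For the upper bound $L(n) \leq \deg Q_j$, take $Q = Q_j$: the lemma with $m = j$ gives $|Q_j S - P_j| = |t|^{-\deg Q_{j+1}}$, which is at most $|t|^{-(n-\deg Q_j+1)}$ exactly when $n < \deg Q_j + \deg Q_{j+1}$, i.e.\ under the right-hand part of the hypothesis on $j$. For the lower bound $L(n) \geq \deg Q_j$, suppose for contradiction that some $k < \deg Q_j$ admits a valid recurrence; the associated $m$ then satisfies $m+1 \leq j$, and the lemma combined with the required approximation bound gives $n+1 \leq \deg Q_m + \deg Q_{m+1} \leq \deg Q_{j-1} + \deg Q_j \leq n$, absurd. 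The only delicate ingredient is the key lemma, whose clean form depends essentially on the ultrametric structure of $F((t^{-1}))$ and on the inductive availability of the identity $|Q_m S - P_m| = |t|^{-\deg Q_{m+1}}$, itself a standard consequence of the recursion defining the convergents in the tradition of Artin.
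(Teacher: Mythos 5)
Your proposal is correct, but note that there is nothing internal to compare it with: the paper states this result verbatim as Theorem~1 of \cite{Niederreiter2} and gives no proof, so what you have written is a self-contained reconstruction of the classical argument rather than an alternative to an argument in the text. Your two ingredients are the standard ones and they fit together correctly: the translation of $L(n)\le k$ into the existence of a monic $Q$ with $\deg Q=k$ and $|QS-P|\le |t|^{-(n-k+1)}$ is exactly right (the coefficient of $t^{-i}$ in $QS$ is the shift-register expression for $1\le i\le n-k$, and multiplying by a power of $t$ absorbs shorter recurrences), and your key lemma is the best-approximation property of convergents in $F((t^{-1}))$, proved via the division $Q=uQ_m+R$, the exact identity $|Q_mS-P_m|=|t|^{-\deg Q_{m+1}}$, induction on $m$ for the remainder term, and the ultrametric inequality. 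The induction does close: for $R\ne 0$ with $\deg R<\deg Q_m$ one has $\deg Q_{m'}\le\deg R<\deg Q_{m'+1}$ with $m'+1\le m$, whence $|RS-P'|\ge |t|^{\deg R-\deg Q_{m'}-\deg Q_{m'+1}}\ge |t|^{-\deg Q_m}$, strictly larger than $|u(Q_mS-P_m)|=|t|^{k-\deg Q_m-\deg Q_{m+1}}$ because $k<\deg Q_{m+1}$, so the ultrametric forces $R=0$ at the minimum. The deductions $L(n)\le\deg Q_j$ (from $n<\deg Q_j+\deg Q_{j+1}$) and $L(n)\ge\deg Q_j$ (the chain $n+1\le\deg Q_m+\deg Q_{m+1}\le\deg Q_{j-1}+\deg Q_j\le n$) are both valid, using the strict monotonicity of $\deg Q_j$. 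Two points you leave implicit but which are harmless: the identity $|Q_mS-P_m|=|t|^{-\deg Q_{m+1}}$ is taken as known (it is standard), and the boundary conventions (the case $j=0$, i.e.\ $n<\deg Q_1$, which requires $\deg Q_{-1}=-\infty$ or a separate remark, and the case of rational $S$ with a terminating continued fraction) are inherited from the statement itself and not discussed.
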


Now what is the ``typical'' linear complexity profile for a sequence? Since Rueppel \cite{Rueppel}
proved that the linear complexity profile of a random binary sequence is $n/2 + O(1)$ (where 
actually $0 \leq O(1) \leq 5/18$), ``good'' sequences are sequences whose linear complexity profile 
is as close to $n/2$ as possible. For more precise results on the typical linear complexity profile for
sequences with values in a finite field, see \cite{NiedereiterProba}. Also see \cite{SXJ}.
The following definition was adopted.

\begin{definition}
A sequence $(s_n)_{n \geq 1}$ of elements of a field F is said to have a 
{\em perfect linear complexity profile (PLCP)} if for all $n \geq 1$ one has 
$L(n) = \lfloor \frac{n+1}{2} \rfloor = \lceil \frac{n}{2} \rceil$.
(Such sequences are also called {\it $1$-perfect}.)
\end{definition}

\begin{theorem}[{\rm Theorem~2 in \cite{Niederreiter2}}]\label{Th2Nie2}
The sequence $s_1, s_2, \ldots$ has a PLCP if and only if its generating function 
$\sum_{i \geq 1} s_i t^{-i}$ is irrational and has all partial quotients of degree $1$ in its continued 
fraction expansion.
\end{theorem}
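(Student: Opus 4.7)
The plan is to reduce everything to Theorem~1 by unwinding what PLCP says in terms of the continued fraction data. I would first set $d_j := \deg Q_j$ (with the convention $d_0 = 0$ coming from $Q_0 = 1$), so that the partial quotient degrees are $\deg a_j = d_j - d_{j-1} \geq 1$ for every $j \geq 1$. Theorem~1 then becomes a step-function description: $L(n) = d_j$ exactly when $n \in [d_{j-1} + d_j, \, d_j + d_{j+1}) \cap \mathbb{N}$, an interval of length $\deg a_j + \deg a_{j+1}$. In parallel, I would restate PLCP as the assertion that $L$ attains each integer value $k \geq 1$ at exactly the two consecutive points $n = 2k-1$ and $n = 2k$, jumping by $1$ each time.

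For the ``if'' direction, assume the series is irrational (so the continued fraction is infinite) and $\deg a_j = 1$ for all $j \geq 1$. Then $d_j = j$ for every $j \geq 0$, and the interval above becomes $[2j-1, 2j+1)$, on which $L \equiv j$. Hence $L(2j-1) = L(2j) = j = \lceil (2j-1)/2 \rceil = \lceil 2j/2 \rceil$, which is precisely PLCP.

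For the ``only if'' direction, I would first dispatch irrationality: if $\sum_{i\geq 1} s_i t^{-i}$ were rational, then by the Remark after Definition~1 the sequence would satisfy a linear recurrence and $L(n)$ would be bounded, contradicting $L(n) = \lceil n/2 \rceil \to \infty$; equivalently, the continued fraction expansion would terminate and $L$ would be eventually constant. So the expansion is infinite. Next, PLCP forces each interval on which $L$ is constant to have length exactly $2$, hence $\deg a_j + \deg a_{j+1} = 2$ for every $j \geq 1$. Combined with $\deg a_j \geq 1$ for all $j \geq 1$, this forces $\deg a_j = 1$ throughout.

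The only real obstacle is bookkeeping at the boundary: one must check that the starting value is correct, i.e.\ that $d_1 = 1$ (forced by $L(1) = 1$ together with $d_1 \geq 1$), and that the intervals $[d_{j-1}+d_j, d_j+d_{j+1})$ indeed tile $\mathbb{N}_{\geq 1}$ so no value of $L$ is skipped on either side of the equivalence. Both points are already encoded in Theorem~1, so once that result is granted, the argument reduces to the direct length-comparison above.
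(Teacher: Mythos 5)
Your argument is correct: reducing the statement to Theorem~1 by comparing the length $\deg a_j+\deg a_{j+1}$ of each interval of constancy of $L$ with the PLCP requirement that every value of $L$ be attained exactly twice, and disposing of rationality via the boundedness of $L$ for shift-register sequences, is sound, including the boundary bookkeeping at $n=1$. Note that the paper itself supplies no proof --- it imports the result as Theorem~2 of \cite{Niederreiter2} --- and your derivation from Theorem~1 is essentially the standard argument given in that source.
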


\section{Apwenian sequences}

Let us first recall that the Hankel determinants of a sequence $(a_n)_{n \geq 0}$ are defined by
$$
H_n = 
\left|
\begin{array}{lllll}
&a_0  &a_1 &\cdots &a_{n-1} \\
&a_1  &a_2 &\cdots  &a_n  \\
&\vdots &\vdots &\ddots &\vdots \\
&a_{n-1} &a_n &\cdots &a_{2n-2} \\
\end{array}
\right|
= (\det(a_{i+j}))_{0 \leq i, j \leq n-1}.
$$

Apwenian sequences (in fact, a variant, namely $\pm$-Apwenian sequences) were defined in 
\cite{Fu-Han} and criteria for a sequence to be apwenian were given in \cite{Guo-Han-Wu}. The 
origin of these sequences is the paper \cite{APWW} where ---thanks to Z.-X. Wen and Z.-Y. Wen 
who discovered and studied sixteen simultaneous recurrence formulas--- it was stated that the 
Hankel determinants of the $\pm 1$ Thue-Morse sequence satisfy, for $n \geq 1$ the congruence 
$H_n/2^{n-1} \equiv 1 \bmod 2$, which of course implies that $H_n$ is not zero. In particular, Hankel 
determinants were proved to form a $2$-dimensional $2$-automatic sequence and the frequency of 
nonzero Hankel determinants was proved to be positive, which implies properties of non-repetition in 
the Thue-Morse sequence and the existence of certain Pad\'e approximants. The references in 
\cite{Guo-Han-Wu} point to several papers studying Hankel determinants related to automatic 
sequences, yielding in particular irrationality measures for certain automatic real numbers.

\bigskip

The case of $0,1$-sequences was studied in \cite{Guo-Han-Wu}, where the following definition
was given

\begin{definition}
Let ${\mathbf c} = (c_n)_{n \geq 0}$ be a sequence with values in $\{0, 1\}$, such that $c_0 = 1$. 
Let $(H_n)_n$ be the sequence of its Hankel determinants. The sequence ${\mathbf c}$ is said to 
be {\em apwenian} if for all $n \geq 0$ one has
$$
H_n \equiv 1 \bmod 2.
$$ 
\end{definition}

\section{A single theorem gathering previous results}

In this section we state a theorem that puts together seemingly unrelated previous results.

\begin{theorem}\label{equivalences}
Let $(s_n)_{n \geq 1}$ be a sequence with values in ${\mathbb F}_2$ with $s_1 = 1$.
Let $(c_n)_{n \geq 0}$ be the sequence defined by $c_n = s_{n+1}$ for all $n \geq 0$.
(Note that $c_0 = 1$.) Then the following properties are equivalent.

\begin{itemize}

\item[ ]{{\rm (i)}}  The sequence $(s_n)_{n \geq 1}$ has a perfect linear complexity profile (PLCP).

\item[ ]{{\rm (ii)}} The Laurent series $\sum_{n \geq 1} s_n t^{-n}$ is irrational, and all the partial
quotients in its (usual) continued fraction expansion have degree $1$.

\item[ ]{{\rm (iii)}} For all $n \geq 1$ one has $s_{2n+1} = s_{2n} + s_n$.

\item[ ]{{\rm (iv)}} For all $n \geq 0$ one has $c_{2n+2} = c_{2n+1} + c_n$.

\item[ ]{{\rm (v)}} The sequence $(c_n)_{n \geq 0}$ is apwenian.

\end{itemize}
\end{theorem}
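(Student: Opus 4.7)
The plan is to assemble the five-way equivalence entirely from prior results, most of which are cited earlier in the paper. Explicitly, (i) $\Leftrightarrow$ (ii) is Theorem~\ref{Th2Nie2} verbatim; (iii) $\Leftrightarrow$ (iv) is a one-line reindexing via $c_n = s_{n+1}$; (ii) $\Leftrightarrow$ (iii) is the Wang--Massey theorem, re-proved via continued fractions in \cite{Niederreiter2}; and (iv) $\Leftrightarrow$ (v) is the apwenian criterion proved in \cite{Guo-Han-Wu}. The content of Theorem~\ref{equivalences} is then the observation that these four separately known equivalences assemble into a single ring.

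I would dispose of (iii) $\Leftrightarrow$ (iv) first: substituting $c_n = s_{n+1}$ into (iv) yields $s_{2n+3} = s_{2n+2} + s_{n+1}$ for all $n \geq 0$, which after setting $m := n+1 \geq 1$ becomes exactly $s_{2m+1} = s_{2m} + s_m$, i.e.\ (iii); the boundary conditions $s_1 = 1$ and $c_0 = 1$ match automatically and the implication is visibly reversible. Next I would invoke Theorem~\ref{Th2Nie2} for (i) $\Leftrightarrow$ (ii), and cite the main criterion of \cite{Guo-Han-Wu} for (iv) $\Leftrightarrow$ (v), which asserts that a $0,1$-sequence with $c_0 = 1$ has all Hankel determinants odd if and only if $c_n \equiv c_{2n+1} + c_{2n+2} \pmod{2}$ for every $n \geq 0$.

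The technical heart, and the only real obstacle, is (ii) $\Leftrightarrow$ (iii). Here I would follow the continued-fraction approach of \cite{Niederreiter2}: by the convergent recursion $Q_{j+1} = a_{j+1} Q_j + Q_{j-1}$, the property that all partial quotients have degree $1$ is equivalent to $\deg Q_j = j$ for every $j$, which by Theorem~1 of \cite{Niederreiter2} is equivalent to $L(n) = \lceil n/2 \rceil$, i.e.\ to PLCP. The remaining step---translating PLCP into the explicit coefficient recursion $s_{2n+1} = s_{2n} + s_n$---is conveniently handled by noting that in characteristic $2$ the Frobenius relation $f(t)^2 = \sum_{n \geq 1} s_n t^{-2n}$ for $f(t) = \sum_{n \geq 1} s_n t^{-n}$ lets one package (iii) into the closed-form identity $t(t+1) f'(t) = f(t)^2 + f(t) + 1$ (using that $t f'(t)$ equals the odd part of $f$ in characteristic $2$). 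A direct comparison of coefficients, or equivalently an induction on convergents showing that a single partial quotient of degree $>1$ would force an incompatible jump in $L(n)$, then closes the loop. Chaining the four pieces produces the claimed ring (i) $\Leftrightarrow \cdots \Leftrightarrow$ (v) and proves the theorem.
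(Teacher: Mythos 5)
Your proposal is correct and follows essentially the same route as the paper: both proofs simply chain the index shift (iii)$\Leftrightarrow$(iv) with the cited results (i)$\Leftrightarrow$(ii) from \cite{Niederreiter2}, (i)/(ii)$\Leftrightarrow$(iii) from \cite{Wang-Massey} (simpler proof in \cite{Niederreiter2}), and (iv)$\Leftrightarrow$(v) from \cite{Guo-Han-Wu}. Your supplementary sketch of the hard step is sound as far as it goes --- the identity $t(t+1)f'(t)=f(t)^2+f(t)+1$ is indeed an exact packaging of (iii) in characteristic $2$ with $s_1=1$ --- but the concluding ``comparison of coefficients / induction on convergents'' is precisely the content of the cited Wang--Massey/Niederreiter theorem rather than an independent argument, which is also how the paper itself handles it, so no gap results.
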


\begin{proof} The equivalence of  (iii) and (iv) is straightforward ---just a shift of indices.
But this is {\em the} point that links the two fields of perfect linear complexity and of 
nonzero Hankel determinants.

The equivalence of (i) and (ii) was proved in \cite[Theorem~2]{Niederreiter2} (also see
Theorem~\ref{Th2Nie2}).

The equivalence of (i) and (iii) was proved in \cite{Wang-Massey}, and a simpler proof was given
in \cite{Niederreiter2} using an idea of \cite{Taussat}.

The equivalence of (iv) and (v) was proved in \cite[Theorem~1.5]{Guo-Han-Wu}.

\end{proof}

\begin{remark}\label{examples1}
Several examples of PLCP or apwenian sequences can be found in the literature. We cite some
of them.

\begin{itemize}

\item[ ] {$*$} The first two examples are the two (simple) Rueppel sequences. Recall that these 
sequences, say $(r_n)_{n \geq 1}$ and $(s_n)_{n \geq 1}$, are defined, respectively, as the 
characteristic function of the powers of $2$ and the characteristic function of the integers of the form 
$(2^k - 1)$. Thus $r_1 = 1$, $r_{2n+1} = 0$ for all $n \geq 1$ and $r_{2n} = r_n$ for all $n \geq 1$. 
Similarly $s_1 = 1$, $s_{2n+1} = s_n$ for all $n \geq 1$, and $s_{2n} = 0$ for all $n \geq 1$. 

\item[ ]{$*$} Generalized Rueppel sequences were defined in \cite{Niederreiter1}
(they are also mentioned, e.g., in \cite{xing-niederreiter}): first take a sequence of integers 
$n_1, n_2, \ldots$ such that $n_1 = 1$ and $n_{h+1} = 2 n_h + c_h$ for $h = 1, 2, \ldots$ 
where $c_h \in \{0, 1\}$; the generalized Rueppel sequence (associated with $(c_h)_h$) 
is the sequence $(s_i)_{i \geq 1}$ defined by $s_i = 1$ if $i = n_h$ for some $h \geq 1$, 
and $s_i = 0$ otherwise.

\item[ ]{$*$} Sequences with PLCP constructed from curves over finite fields can be found in 
\cite{XNLD} (also see the references therein).

\item[ ]{$*$} Examples of apwenian and non-apwenian sequences can be found in \cite{Guo-Han-Wu}.

\end{itemize}

Actually, generalized Rueppel sequences that are defined above can be viewed as generated by a 
map from the set of all binary sequences to the set of PLCP sequences or to the set of apwenian  
sequences after a small modification. There are at least three ``natural'' (or ``simple''?) similar 
maps from the set of binary sequences to the set of apwenian sequences or to the set of PLCP 
sequences, namely $\varphi_i : \mathbf{b} = (b_n)_n \mapsto \mathbf{a} = (a_n)_n$ as follows.
 
 \begin{itemize}
 
\item[ ]{$*$} The first map $\varphi_1$ is the bijection defined by $\mathbf{a}=\varphi_1(\mathbf{b})$, 
with, in terms of (Artin) continued fraction 
\begin{equation*}
        \sum_{n\geq 0} {a_n}t^{-n-1}=
\cfrac{1}{t+b_0 +\cfrac{1}{t+b_1+\cfrac{1}{t+b_2+\cfrac{1}{\ddots}}}}
\end{equation*}
or in terms of Jacobi continued fraction
\begin{equation*}
        \sum_{n\geq 0} {a_n}x^n=
\cfrac{1}{1+b_0x+\cfrac{x^{2}}{1+b_1x+\cfrac{x^2}{1+b_2x+\cfrac{x^2}{\ddots}}}}\cdot
\end{equation*}

\item[ ]{$*$} The second map $\varphi_2$ is a bijection defined by $\mathbf{a}=\varphi_2(\mathbf{b})$ 
that arises from a ``selector''. Namely the apwenian sequence $\mathbf{a}$ satisfies
$$
a_0=1,\quad a_n=a_{2n+1}+a_{2n+2}\ (n\geq 0).
$$
Since $a_0=1=a_1+a_2$, we have two choices for $a_1$ and $a_2$: (i) $a_1=0$ and $a_2=1$, 
(ii) $a_1=1$ and $a_2=0$. We select one of the two choices according to the value of $b_0$. If 
$b_0=0$, we select (i); if $b_0=1$, we select (ii). In general, if $a_n=1$, we construct $a_{2n+1}$ 
and $a_{2n+2}$ in the same way. We define $a_{2n+1}=b_n$, $a_{2n+2}=1+b_n$. If $a_n=0$, 
$0=a_{2n+1}+a_{2n+2}$. We have two choices: (iii) $a_{2n+1}=a_{2n+2}=0$, 
(iv) $a_{2n+1}=a_{2n+2}=1$. We will select one of the two choices from the value of $b_n$. If 
$b_n=0$, we select (iii); if $b_n=1$, we select (iv). We define $a_{2n+1}=a_{2n+2}=b_n$. This
process is of course exactly defining inductively the sequence by $a_0 = 1$ and, if $a_n$ is known, 
then $a_{2n+1} = b_n$ and $a_{2n+2} = a_n + b_n$.

\item[ ]{$*$} The third map $\mathbf{a}=\varphi_3(\mathbf{b})$ has been defined above (we 
replace $(c_h)$ with $(b_h)$ here). First we define a sequence $n_0, n_1, n_2, \ldots$ by
$$
n_0=1, \quad n_{h+1} = 2 n_h + b_h \ (h\geq 0).
$$
Then we define $a_i=1$ if $i=n_h$ for some $h\geq 0$, and $a_i=0$ otherwise.
Note that this map is not a bijection as the example of the fixed point of $1 \to 10$, $0 \to 11$
(the period-doubling sequence) shows: it is not a generalized Rueppel sequence because it 
contains infinitely many occurrences of the block $111$.
 
\end{itemize}
 
An interesting question is to study which general properties of sequence ${\mathbf b}$ are kept 
when applying $\varphi_i$, $i = 1,2,3$. Typically, if ${\mathbf b}$ has some kind of ``regularity'',
is it also the case for ${\mathbf a}$? See in particular Remark~\ref{examples2} below.

\end{remark}

\section{Automaticity of sequences having PLCP or being apwenian}

Is it possible to impose extra ``regularity'' conditions on apwenian-PLCP sequences?
In particular are there {\em automatic} or {\em morphic} such sequences? (For more about these
notions, the reader can consult \cite{Allouche-Shallit}.) This question was asked in \cite{Guo-Han-Wu},
where it was proved that the only $0,1$-apwenian sequence that is a fixed point of a uniform 
morphism is the period-doubling sequence (fixed point of $1 \to 10$, $0 \to 11$). Restricting to 
$2$-uniform morphisms, one cask a general question: are there $0,1$-apwenian sequences that 
are $2$-automatic? We completely characterize these sequences below. To begin with, we give
an easy lemma. 

\begin{lemma}
Let $f(t) = t + a_2 t^2 + a_3 t^3 + \ldots$ be a formal power series on ${\mathbb F}_2$. Then there
exist unique formal power series $u \in 1 + {\mathbb F}_2[[t]]$ and $v \in t{\mathbb F}_2[[t]]$ such 
that $f(t) = v^2(t) + tu^2(t)$. Furthermore one has $u^2 = f'$ and $v^2 = f + t f'$. 
\end{lemma}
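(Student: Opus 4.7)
The plan is to exploit the Frobenius-like behaviour of squaring in characteristic~$2$. Write any power series $g(t) = \sum_{k\ge 0} c_k t^k \in \mathbb{F}_2[[t]]$; then $g(t)^2 = \sum_{k\ge 0} c_k t^{2k}$, so $g^2$ contains only even powers of $t$, and conversely the map $g \mapsto g^2$ is a bijection from $\mathbb{F}_2[[t]]$ onto the subring $\mathbb{F}_2[[t^2]]$ of series in even powers. In particular, every series with only even powers of $t$ is the square of a unique element of $\mathbb{F}_2[[t]]$.

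Now split $f$ into its even and odd parts, $f = f_e + f_o$ where $f_e(t)=\sum_{k\ge 1} a_{2k} t^{2k}$ and $f_o(t) = \sum_{k\ge 0} a_{2k+1} t^{2k+1}$. If $f = v^2 + t u^2$ with $v \in t\mathbb{F}_2[[t]]$ and $u\in 1+t\mathbb{F}_2[[t]]$, then $v^2$ contains only even powers of $t$ and $tu^2$ contains only odd powers, so necessarily $v^2 = f_e$ and $tu^2 = f_o$. By the bijectivity of squaring noted above, $v$ is determined uniquely by $v(t) = \sum_{k\ge 1} a_{2k} t^k$, and $u$ by $u(t) = \sum_{k\ge 0} a_{2k+1} t^k$; since $a_1 = 1$ we have $u(0) = 1$, so $u \in 1+t\mathbb{F}_2[[t]]$, and $v(0)=0$, so $v \in t\mathbb{F}_2[[t]]$. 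This proves existence and uniqueness.

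It remains to identify $u^2$ and $v^2$ in terms of $f$ and $f'$. Formal differentiation gives $f'(t) = \sum_{n\ge 1} n a_n t^{n-1}$, and in characteristic $2$ the coefficient $n \bmod 2$ kills the even indices, so $f'(t) = \sum_{k\ge 0} a_{2k+1} t^{2k} = u^2(t)$. Multiplying by $t$ yields $tf' = f_o$, and since $f = f_e + f_o$ we obtain $f + tf' = f_e + 2f_o = f_e = v^2$ in characteristic $2$. No step here poses any real obstacle; the one point to keep in mind is that the argument is valid only in characteristic $2$, where squaring is additive and annihilates the derivatives of even-degree monomials.
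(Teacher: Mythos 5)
Your proof is correct and follows essentially the same route as the paper's: decomposing $f$ into its even and odd parts and using that squaring in characteristic $2$ is a bijection onto series in $t^2$. The only difference is that you spell out the identities $u^2 = f'$ and $v^2 = f + tf'$, which the paper dismisses as clear.
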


\begin{proof}
Decomposing $f$ into its ``even'' and ``odd'' parts, we have (recall that $x^2 = x$ for any 
$x \in {\mathbb F}_2$):
$$
\begin{aligned}
f(t) &= (a_2 t^2 + a_4 t^4 + \ldots) + t (1 + a_3 t^2 + a_5 t^4 + \ldots) \\
&= (a_2 t + a_4 t^2 + \ldots)^2 + t (1 + a_3 t  + a_5 t^2 + \ldots)^2.
\end{aligned}
$$
This gives the existence of $v(t) = a_2 t + a_4 t^2 + \ldots$ and $u(t) = 1 + a_3 t  + a_5 t^2 + \ldots$
such that $f(t) = v^2(t) + tu^2(t)$.
Conversely, if $f$ as above satisfies $f(t) = v^2(t) + tu^2(t)$, then the uniqueness of $u$ and $v$
is a consequence of the uniqueness of the decomposition of $f$ into its odd and even parts.
The last assertion is clear (and it could be used to give another proof of the first assertion).
\end{proof}

We now address the question of which $0,1$-apwenian sequences are automatic. 
Our main tools are first the Christol (and Christol-Kamae-Mend\`es France-Rauzy) theorem 
(see, e.g., \cite{Allouche-Shallit}) which asserts that a sequence $(c_n)_n$ with coefficients in 
${\mathbb F}_2$ is $2$-automatic if and only if the formal power series $\sum c_n t^n$ in 
${\mathbb F}_2[[t]]$ is algebraic over ${\mathbb F}_2(t)$, and second the following result in 
\cite{Kohel-Ling-Xing}. (Note that this result is the same as \cite[Theorem~1]{Baum-Sweet2}.)

\bigskip

\begin{theorem}[{\rm \cite[Theorem~1]{Baum-Sweet2} and \cite[Corollary~2]{Kohel-Ling-Xing}}]
{\it The binary series $f(t) = t + a_2 t^2 + a_3 t^3 + ...$ has a PLCP if and only if the series
$u(t)$ and $v(t)$ defined above satisfy $v^2 +v =1+ u + t u^2$. (In other words, $v$ is
the unique root in $t {\mathbb F}_2[[t]]$ of $v^2 +v =1+ u + t u^2$.)}
\end{theorem}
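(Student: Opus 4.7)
The plan is to reduce the claim to a direct coefficient comparison, using Theorem~\ref{equivalences} to swap PLCP for the simple recurrence $s_{2n+1}=s_{2n}+s_n$. Writing $a_i=s_i$ so that $f(t)=\sum_{i\ge 1}a_it^i$ with $a_1=1$, the previous lemma gives
\[
u(t)=\sum_{k\ge 0}a_{2k+1}t^k=1+a_3t+a_5t^2+\cdots,\qquad v(t)=\sum_{k\ge 1}a_{2k}t^k.
\]
So the plan is: (1) invoke Theorem~\ref{equivalences} to replace ``$f$ has PLCP'' with ``$a_{2n+1}=a_{2n}+a_n$ for all $n\ge 1$''; (2) compute $v^2+v$ and $1+u+tu^2$ explicitly as power series; (3) match coefficients of $t^m$ on each side and check that both sides agree for every $m$ if and only if the recurrence holds; (4) verify uniqueness of $v$ in $t\mathbb{F}_2[[t]]$.

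For step (2), I would use Frobenius: $v^2=\sum_{k\ge 1}a_{2k}t^{2k}$ and $u^2=\sum_{k\ge 0}a_{2k+1}t^{2k}$, so $tu^2=\sum_{k\ge 0}a_{2k+1}t^{2k+1}$. Reading off the coefficient of $t^m$ on each side:
\[
[t^m](v^2+v)=\begin{cases}0 & m=0,\\ a_{2m} & m\ \text{odd},\\ a_m+a_{2m} & m\ \text{even},\ m\ge 2,\end{cases}
\qquad
[t^m](1+u+tu^2)=\begin{cases}0 & m=0,\\ a_{2m+1}+a_m & m\ \text{odd},\\ a_{2m+1} & m\ \text{even},\ m\ge 2.\end{cases}
\]
(The constant term on the right is $1+u(0)+0=1+1=0$ in $\mathbb{F}_2$, matching the left.) Step (3) is then immediate: for odd $m\ge 1$ the equality reads $a_{2m+1}=a_{2m}+a_m$, and for even $m\ge 2$ it reads $a_{2m+1}=a_{2m}+a_m$; conversely, if the recurrence holds for all $n\ge 1$, each coefficient of $t^m$ matches. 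Thus $v^2+v=1+u+tu^2$ if and only if $a_{2n+1}=a_{2n}+a_n$ for every $n\ge 1$, which by Theorem~\ref{equivalences} is equivalent to PLCP.

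For step (4), given $u$, uniqueness of $v\in t\mathbb{F}_2[[t]]$ solving $v^2+v=1+u+tu^2$ comes from induction on the coefficients: writing $v=\sum_{k\ge 1}v_kt^k$, the equation determines $v_m$ uniquely from the coefficient of $t^m$ on the right (the $v^2$ contribution only involves $v_{m/2}$ when $m$ is even, which is already known), provided the constant term on the right is zero — which it is.

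Honestly there is no serious obstacle here; the work is purely bookkeeping. The only spot demanding care is keeping the odd/even parity of indices straight when computing $[t^m](v^2+v)$ versus $[t^m](1+u+tu^2)$, and observing that the two apparently different recurrences produced in the odd and even cases are in fact the same recurrence $a_{2n+1}=a_{2n}+a_n$ for all $n\ge 1$.
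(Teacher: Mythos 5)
Your computation is correct, and the statement you prove is exactly what is claimed: with $u=\sum_{k\ge 0}a_{2k+1}t^k$ and $v=\sum_{k\ge 1}a_{2k}t^k$, comparing coefficients of $t^m$ in $v^2+v$ and $1+u+tu^2$ (for $m$ odd and $m$ even separately, and checking the constant term using $a_1=1$) gives that the functional equation holds if and only if $a_{2n+1}=a_{2n}+a_n$ for all $n\ge 1$, and your induction on coefficients for the uniqueness of $v$ in $t\mathbb{F}_2[[t]]$ is fine. Note, however, that the paper does not prove this theorem at all: it is quoted verbatim from Baum--Sweet and Kohel--Ling--Xing, where the proofs go through continued fractions of badly approximable series. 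Your route instead derives it from Theorem~\ref{equivalences}, specifically from the equivalence (i)$\Leftrightarrow$(iii), so what you have really shown is that the Baum--Sweet criterion is nothing but the generating-function restatement of the Wang--Massey recurrence $s_{2n+1}=s_{2n}+s_n$; the analytic content is entirely outsourced to that cited equivalence (Wang--Massey, with the simpler proof in Niederreiter's paper). This is a legitimate and pleasantly elementary derivation within the paper's logical structure (no circularity, since (i)$\Leftrightarrow$(iii) is established independently of Baum--Sweet), but be aware that it is not an independent proof of the criterion: if one wanted to use the criterion to reprove (i)$\Leftrightarrow$(iii), your argument would give nothing new, whereas the original continued-fraction proofs would.
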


It is easy to deduce from the result above the following theorem (which was proved in 
\cite{Baum-Sweet2} and for which we give a proof in terms of $u$ and $v$ above, and a second
proof using automatic sequences).

\begin{theorem}\label{alg}
A series $f(t) = t + a_2 t^2 + a_3 t^3 + ... \in {\mathbb F}_2[[t]]$ which has a PLCP is algebraic 
over ${\mathbb F}_2(t)$ if and only if it can be written $f(t) = v^2+t u^2$, with $u$ any series 
in $1+t {\mathbb F}_2[[t]]$ algebraic over ${\mathbb F}_2(t)$ and $v$ the root of 
$v^2 +v =1+ u + t u^2$ lying in $t {\mathbb F}_2[[t]]$.
\end{theorem}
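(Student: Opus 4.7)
The plan is to reduce algebraicity of $f$ to algebraicity of $u$, exploiting two identities from earlier results. From the Lemma, $f = v^2 + tu^2$ with $u^2 = f'$; from the Baum--Sweet / Kohel--Ling--Xing theorem just quoted, the PLCP condition is equivalent to the quadratic relation $v^2 + v = 1 + u + tu^2$. The first identity lets me recover $u$ from $f$, and the second lets me recover $v$ (hence $f$) from $u$, with both recoveries preserving algebraicity over $\mathbb{F}_2(t)$.

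For the first proof, in terms of $u$ and $v$: assume $f$ is algebraic over $\mathbb{F}_2(t)$. Formal differentiation preserves algebraicity (differentiate a minimal polynomial of $f$ and solve for $f'$), so $f' = u^2$ is algebraic; hence $u$ itself is algebraic, since it satisfies $X^2 - u^2 = 0$. Conversely, assume $u \in 1 + t\mathbb{F}_2[[t]]$ is algebraic. Then $1 + u + tu^2$ is algebraic, and $v$ is a root of the quadratic $X^2 + X + (1 + u + tu^2) = 0$ with coefficients algebraic over $\mathbb{F}_2(t)$, so $v$ is algebraic, and so is $f = v^2 + tu^2$.

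For the second proof, I would use Christol's theorem to rephrase algebraicity over $\mathbb{F}_2(t)$ as $2$-automaticity of the coefficient sequence. Since a sequence is $2$-automatic iff both its even-indexed and odd-indexed subsequences are, and since the coefficients of $v^2$ and $tu^2$ are respectively the even- and odd-indexed parts of the coefficients of $f$ (equivalently, after compression, those of $v$ and $u$), automaticity of $f$ is equivalent to joint automaticity of $u$ and $v$. The relation $v^2 + v = 1 + u + tu^2$ then makes $v$ algebraic over $\mathbb{F}_2(t)(u)$, so once $u$ is automatic, Christol again delivers automaticity of $v$. Hence $f$ is automatic iff $u$ is.

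The main obstacle I anticipate is largely bookkeeping: verifying $(v^2)' = 0$ and $(tu^2)' = u^2$ in characteristic $2$ (so that $u^2 = f'$), and carefully matching the decimation subsequences of $(s_n)$ with the series $u$ and $v$. Once these identifications are set up, each implication is a one-line deduction from the facts that algebraicity is preserved under differentiation, under taking square roots (in characteristic $2$, $X^2 - \alpha$ is a polynomial), and under solving quadratics.
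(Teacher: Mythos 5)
Your first argument coincides with the paper's own proof of Theorem~\ref{alg}: sufficiency because $v$ satisfies the quadratic $v^2+v=1+u+tu^2$ with algebraic coefficients, and necessity by differentiating a minimal equation for $f$ to get $f'\in{\mathbb F}_2(t)(f)$ and using $f'=u^2$ from the Lemma. Your sketched second proof is also sound but is a mild variant of the paper's: for the converse direction the paper does not pass back through Christol's theorem and the quadratic relation, but instead builds the sequence $(a_n)$ from $u$ and checks directly that its $2$-kernel lies in the finite-dimensional space spanned by the $2$-kernel of $u$, the sequence itself, and $\delta_0$, which is stable under $T_0$ and $T_1$.
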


\begin{proof}
The condition is clearly sufficient: if $u$ is algebraic over ${\mathbb F}_2(t)$, then $v$ is also
algebraic since $v^2 +v =1+ u + t u^2$, hence $f = v^2+t u^2$ is algebraic. Conversely it is
easy to see that if $f = v^2+t u^2$ is algebraic over ${\mathbb F}_2(t)$, then so is $f'$ (write a
minimal algebraic equation for $f$ and take the derivative: this yields that $f'$ belongs to  
${\mathbb F}_2(t)(f)$, hence is algebraic). But $f' = u^2$. Thus $u$ is algebraic, hence $v$ is 
algebraic. This implies the algebraicity of $f$.\end{proof}

\noindent
{\it Second proof.} 
It is also possible to prove Theorem~\ref{alg} by looking at the $2$-kernel of $f$. Recall that
the $2$-kernel of the sequence $(c_n)_{n \geq 0}$ is the set of subsequences 
$$
\{(c_{2^k n + j})_{n \geq 0}, \ k \geq 0, \ j \in [0, 2^k-1]\}.
$$
Note that the $2$-kernel of any sequence is the smallest set of subsequences of this sequence
that is stable by the decimation operators (sometimes called Cartier operators): 
$$
T_0: \ (z_n)_{n \geq 0} \to (z_{2n})_{n \geq 0} \ \text{and} \ 
T_1: \ (z_n)_{n \geq 0} \to (z_{2n+1})_{n \geq 0}.
$$
Also recall that a sequence is $2$-automatic if and only if its $2$-kernel is a finite set (see, e.g., 
\cite{Allouche-Shallit}) and that all computations on sequences below are done modulo $2$.

\medskip

Now, if $(a_n)_{n \geq 1}$ is a $2$-automatic sequence with a PLCP, with $a_1 = 1$, one has that 
$(a_{2n+1})_{n \geq 0}$ is also $2$-automatic. Define $(u_n)_{n \geq 0}$ and $(v_n)_{n \geq 0}$ 
by $u_n =  a_{2n+1}$ for all $n \geq 0$ and $v_n =  a_{2n}$ for all $n \geq 1$. Since $(a_n)$ has 
a PLCP, we have that $v_n = a_{2n} = a_n + a_{2n+1} = a_n + u_n$ for all $n \geq 1$. 
Let $u := \sum_{n \geq 0} u_n t^n$ and $v := \sum_{n \geq 1} v_n t^n$. One has of course
$\sum_{n \geq 1} a_n t^n = v^2 + t u^2$, and it is easy to see that $v^2 + v = 1 + u + t u^2$.

\medskip

Conversely, let $(u_n)_{n \geq 0}$ be any $2$-automatic sequence with $u_0 = 1$. Define the 
sequence $(a_n)_{n \geq 1}$ by $a_{2n+1} = u_n$ for all $n \geq 0$ and $a_{2n} = a_n + u_n$ 
for all $n \geq 1$. By construction the sequence $(a_n)_{n \geq 1}$ has a PLCP since 
$a_n + a_{2n} + a_{2n+1} = 0$ for all $n \geq 1$. Let us prove that it is
$2$-automatic. To make the manipulation of indices simpler, define $(\tilde{a}_n)_{n \geq 0}$ 
by $\tilde{a_0} = 0$ and $\tilde{a}_n = a_n$ for all $n \geq 1$. Furthermore let 
$(\delta_0(n))_{n \geq 0}$ be defined by $\delta_0(0) = 1$ and $\delta_0(n) = 0$ for all $n \geq 1$. 
The sequence $(a_n)_{n \geq 1}$ is $2$-automatic if and only if $(\tilde{a}_n)_{n \geq 1}$ is 
$2$-automatic. Since $(a_{2n+1})_{n \geq 0} = (\tilde{a}_{2n+1})_{n \geq 0}$ is $2$-automatic, its 
$2$-kernel is finite. Let ${\mathcal H}$ be the ${\mathbb F}_2$-vector space spanned by this $2$-kernel, the sequence $(\tilde{a}_n)_{n \geq 0}$ and the sequence $(\delta_0(n))_{n \geq 0}$. This
vector space has finite dimension, hence is finite. We will prove that the $2$-kernel of 
$(\tilde{a}_n)_{n \geq 0}$ is included in ${\mathcal H}$. It suffices to prove that ${\mathcal H}$
is stable by the operators $T_0$ and $T_1$. The images by $T_0$ and by $T_1$ of each element
in the $2$-kernel of $(\tilde{a}_{2n+1})_{n \geq 0}$, as well as the images of $(\delta_0(n))_{n \geq 0}$
clearly belong to ${\mathcal H}$. It thus suffices to prove that $T_0((\tilde{a}_n)_{n \geq 0})$ and
$T_1((\tilde{a}_n)_{n \geq 0})$ belong to ${\mathcal H}$. But we have
$$
T_0((\tilde{a}_n)_{n \geq 0}) = (\tilde{a}_{2n})_{n \geq 0} =
(\tilde{a}_n)_{n \geq 0} + (\tilde{a}_{2n+1})_{n \geq 0} + (\delta_0(n))_{n \geq 0}
$$
$$
\text{and} \ \ T_1((\tilde{a}_n)_{n \geq 0}) =  (\tilde{a}_{2n+1})_{n \geq 0}. \ \ \ \Box
$$

\begin{remark}\label{examples2}

We give examples of sequences that have PLCP and are $2$-automatic.

\begin{itemize}

\item[ ]$*$ It is easy to see that the $2$-kernel of each of the two Rueppel sequences is finite, 
hence these sequences are $2$-automatic.

\item[ ]$*$ The period-doubling sequence is an apwenian $2$-automatic sequence as shown
in \cite{Guo-Han-Wu}. Shifting the indices, this is the sequence $(z_n)_{n \geq 1}$ with values 
in ${\mathbb F}_2$ defined by $z_1 = 1$, $z_{2n} = 1+z_n$ for all $n \geq 1$, and 
$z_{2n+1} = 1$  for all $n \geq 1$. In particular $z_n + z_{2n} + z_{2n+1} = 0$ for all $n \geq 1$.

\item[ ]$*$ Define the sequence $(w_n)_n$ with values in ${\mathbb F}_2$  by $w_1 = 1$,
$w_{2n+1} = 1 + w_n$ for all $n \geq 1$, and $w_{2n} = 1$ for all $n \geq 1$. This sequence is
$2$-automatic (its $2$-kernel is finite). Clearly $w_n + w_{2n} + w_{2n+1} = 0$ for all $n \geq 1$.

\end{itemize}

Now we give examples of precise questions about (non-)conservation of ``regularity'' properties 
alluded to at the end of Remark~\ref{examples1} above. Namely: {\it If the sequence $\mathbf{b}$ 
is eventually periodic (i.e., periodic from some index on), algebraic, automatic, regular, morphic, ..., 
what can be said about $\mathbf{a} =\varphi_i(\mathbf{b})$?} Here are some answers.

\begin{itemize}

\item[ ]{$*$} If $\mathbf{b}$ is eventually periodic, then $\varphi_1(\mathbf{b})$ is quadratic. 
(This is well known and is the same as in the real case.)

\item[ ]{$*$} If $\mathbf{b}$ is the Thue-Morse sequence, then $\varphi_1(\mathbf{b})$ is 
$2$-automatic, and its associated formal power series is algebraic of degree $4$ 
\cite{Hu-Wei-Han2}. Other examples of automatic sequences $\mathbf{b}$, for which
$\varphi_1(\mathbf{b})$ is automatic can be found in \cite{Lasjaunias-Ruch1, Lasjaunias-Ruch2}.
It would be interesting to find an example of a sequence $\mathbf{b}$ such that 
$\varphi_1(\mathbf{b})$ is not automatic. The examples we have in mind go the other way round:
typically the Baum and Sweet sequence \cite{Baum-Sweet1} is a $2$-automatic sequence, but its 
associated power series has a continued fraction expansion ---with partial quotients of degree $1$ or
$2$--- that is not automatic \cite{Mkaouar}.

\item[ ]{$*$} As seen in Theorem~\ref{alg} above, $\varphi_2(\mathbf{b})$ is $2$-automatic if and 
only if $\mathbf{b}$ is $2$-automatic.

\item[ ]{$*$} We have that $\varphi_3(\mathbf{b})$ is $2$-automatic if and only if $\mathbf{b}$ 
is eventually periodic. Hint: the integers $n_h$ are exactly the integers with base-$2$ expansions 
$1$, $1b_0$, $1b_0b_1$, etc.; these expansions can be used to feed a direct automaton 
(i.e., an automaton reading the digits of the integers from left to right). J.~Shallit (private 
communication) gave us the more formal proof below.

\end{itemize}

\end{remark}
\begin{theorem}[{\rm J. Shallit}]\label{shallit} Let $(b_h)_h$ a sequence with values in $\{0,1\}$.
The generalized Rueppel sequence $(s_i)$ associated with $(b_h)_h$ is $2$-automatic if and only 
if $(b_h)$ is eventually periodic.
\end{theorem}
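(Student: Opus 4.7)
The plan is to reduce everything to a single combinatorial observation: by a trivial induction on the recurrence $n_{h+1}=2n_h+b_h$, the integer $n_h$ has binary expansion $1b_0b_1\cdots b_{h-1}$. Hence $s_i=1$ precisely when the binary representation of $i$ (with leading bit $1$) is a finite prefix of the infinite word $W:=1b_0b_1b_2\cdots$. I will then invoke the (standard) characterization of $2$-automatic sequences by DFAs reading binary expansions most-significant-bit first, which is equivalent to the finiteness of the $2$-kernel.

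For the ``if'' direction, assume $(b_h)$ is eventually periodic, say $b_{h+p}=b_h$ for all $h\geq h_0$. I build an MSB-first DFA $M$ computing $s_i$: the start state sends $0$ to a dead state and $1$ to a ``match-state'' $R_0$; from $R_j$, reading $b_j$ goes to $R_{j+1}$ while the other symbol goes to the dead state; every $R_j$ is accepting. By the first paragraph, $M$ outputs $s_i$ on input $i$. Two states $R_j$ and $R_{j'}$ are Myhill--Nerode equivalent iff the shifted sequences $(b_{j+k})_{k\geq0}$ and $(b_{j'+k})_{k\geq0}$ agree, so under eventual periodicity there are at most $h_0+p$ classes. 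Hence $M$ has finitely many states after minimization, proving that $(s_i)$ is $2$-automatic.

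For the ``only if'' direction, assume $(s_i)$ is $2$-automatic and fix any MSB-first DFA $M$ computing it. For each $h\geq 0$ let $\rho_h$ be the state $M$ reaches on input $1b_0\cdots b_{h-1}$; this state is accepting because $n_h$ belongs to the support of $s$. Since $M$ has finitely many states, there exist $h<h'$ with $\rho_h=\rho_{h'}$. For any finite word $w$, running $M$ from $\rho_h$ on $w$ accepts iff $w$ is a prefix of $(b_h,b_{h+1},\ldots)$ (again by the first paragraph), and likewise from $\rho_{h'}$ for $(b_{h'},b_{h'+1},\ldots)$. Equality of these prefix languages forces $b_{h+k}=b_{h'+k}$ for every $k\geq 0$, so $(b_h)$ is periodic from index $h$ on with period $h'-h$, hence eventually periodic.

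The main delicate point, I expect, is keeping the reading direction straight: the clean ``prefix of $W$'' description requires an MSB-first automaton, and one must invoke the fact that $2$-automaticity is equally witnessed by MSB-first and LSB-first DFAs. Once that is granted, both directions are a packaged Myhill--Nerode/pigeonhole argument and the proof involves no computation.
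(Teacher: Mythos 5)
Your proof is correct and takes essentially the same route as the paper: both rest on the observation that the support of $(s_i)$ consists exactly of the integers whose binary expansions are the prefixes of the infinite word $1b_0b_1b_2\cdots$, combined with the most-significant-digit-first automaton characterization of $2$-automaticity. The only difference is that the paper simply cites the classical fact that the set of prefixes of an infinite word is regular if and only if the word is ultimately periodic, whereas you prove that fact directly by the Myhill--Nerode/pigeonhole argument, which is a valid (and self-contained) substitute.
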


\begin{proof}
We take the notation of Remark~\ref{examples1} for the generalized Rueppel sequences where
$(c_h)_h$ is replaced with $(b_h)_h$. As noted above the set of all base-$2$ representations of the 
integers $n_h$ is the set $\{1b_0 b_1 b_2 \ldots b_i : i  \geq 0\}$. In other words these representations 
are the prefixes of the infinite word $1 b_0 b_1 b_2 \ldots$ We know that a binary sequence is 
$2$-automatic if and only if the base-$2$ representations of the indices where it is equal to $1$ form 
a regular set (see, e.g., \cite{Allouche-Shallit}). An easy classical result asserts that the set of all 
prefixes of an infinite word is regular if and only if that word is ultimately periodic. The result follows 
by combining these two claims. 
\end{proof}

\begin{remark}
The linear complexity of sequences generated by {\em cellular automata} was studied by several 
authors (see, e.g.,  \cite{FusterSabater} and the references therein), but this is of course a quite 
different story.
\end{remark}

\section{More Laurent series with partial quotients of degree one}

For a Laurent series, the equivalence between having partial quotients of degree $1$ and
having PLCP, namely properties (i) and (ii) in Theorem~\ref{equivalences} above for the case 
${\mathbb F}_2$, is actually true for any field (see \cite{Niederreiter2}). It is thus interesting
to find such series. Among many examples in the literature, we cite the following having all their
partial quotients of degree $1$. Mills and Robbins \cite{Mills-Robbins} give, for each prime 
$p \geq 3$, explicit examples of formal Laurent series with coefficients in ${\mathbb F}_p$ that are 
algebraic over ${\mathbb F}_p(X)$ and have all their partial quotients of degree $1$. Examples
in characteristic $2$ can be found in \cite[p.~290]{Thakur}. Quartic power series in 
${\mathbb F}_3((X^{-1}))$ are given by Lasjaunias \cite{Lasjaunias2000}. More general series 
are studied in \cite{Lasjaunias-Ruch1} and \cite{Lasjaunias-Ruch2} (where the name {\em flat series} 
is used for algebraic series having all partial quotients of degree $1$). Hyperquadratic power series 
in ${\mathbb F}_3((X^{-1}))$ are given in \cite{GomezPerez-Lasjaunias}, while \cite{Lasjaunias-Yao} 
gives a large family of examples in odd characteristic and \cite{Lasjaunias2017} addresses the 
case of characteristic $2$.

\medskip

Addressing the previous examples uses their {\em hyperquadraticity}: an irrational element 
$\alpha$ in ${\mathbb F}_q((X^{-1}))$ is called hyperquadratic if it satisfies an equation 
$\alpha = (A\alpha^r+B)/(C\alpha^r+D)$, where $r$ is a power of the characteristic of 
${\mathbb F}_q$ and the coefficients $A, B, C, D$ belong to ${\mathbb F}_q[T]$. For
examples of a different nature, van der Poorten and coauthors studied series related to the 
{\em folding lemma} for continued fractions and to paperfolding or to playing between finite fields 
and rational numbers, see \cite{VdP-Shallit} and \cite{A-MF-VdP}.
We cannot resist to quote a remark of A. van der Poorten in \cite{A-MF-VdP}: 
{\it By the way, it is rather easy to see that, in the `generic case', an infinite series has all its partial
quotients linear --- though it is debatable whether a series with coefficient $0$ or $1$ is `generic'.}
The results in \cite{A-MF-VdP} were generalized in \cite{Cantor}, where a formal Laurent series is 
called {\em normal} if its continued fraction is not finite and all of the partial quotients, except 
perhaps the first, have degree $1$. About continued fractions of Laurent series, and, in particular,
{\em normal} series, one should certainly read the nice paper \cite{VdPPortland}. 
Interestingly enough a variation on the Thue-Morse power series, namely the Laurent series
$t \prod_{k \geq 0} (1 - t^{-2^k})$ has all its partial quotients of degree $1$, see \cite[Prop.~3.2]{BZ}.

Note that the seemingly simpler study of rational functions $p/q$ having all partial quotients of 
degree one in their continued faction expansion, is far from being straightforward, see, e.g., 
\cite{Mesirov-Sweet, Niederreiter-monatshefte, Wang, Blackburn, Lauder1998, Friesen}; 
note that in \cite{Blackburn}, on p.~104, there is a link, for sequences with values in ${\mathbb F}_2$, 
between certain Hankel determinants being different from $0$ and the relations 
$s_n + s_{2n} + s_{2n+1} = 0 \bmod 2$ for certain values of $n$.

\begin{remark}
We have seen that several terms are used to name Laurent series with all their partial quotients of 
degree $1$, or the sequences of coefficients of these series: namely PLCP, flat, normal. Another
term is used, e.g., in \cite{Blackburn, Lauder1998}: the {\em orthogonal multiplicity} of a monic 
polynomial $g$ is the number of polynomials $f$, coprime to $g$ and of degree less than the 
degree of $g$, such that all the partial quotients of the continued fraction expansion of $f/g$ are of 
degree $1$: hence saying that there exists $f$ such that $f/g$ has all its partial quotients of degree 
$1$, is the same as saying that $g$ has positive orthogonal multiplicity. Let us also point out the
name {\em badly approximable} for a series whose continued fraction expansion has partial 
quotients with bounded degree. 
\end{remark}

\section{Miscellanea}
In this section we propose a few questions that we could either not answer, or for which
we only have partial results.

\begin{itemize}

\item[ ]{$*$} We have seen conditions on a PLCP/apwenian sequence to be $2$-automatic. The 
question of whether an apwenian sequence can be an iterative fixed point of a $d$-substitution
was addressed in \cite{Guo-Han-Wu}: {\it the only $(0,1)$-apwenian sequence which is a fixed 
point of some $d$-substitution is the period doubling sequence}. More generally, are there
PLCP/apwenian sequences that are $d$-automatic for some $d$ not a power of $2$, or even 
morphic? Note that the authors of \cite{Guo-Han-Wu} conjecture that the {\it fixed points of 
substitutions of non-constant length on $\{0, 1\}$ cannot be apwenian}.

\item[ ]{$*$} Is there a ``combinatorial'' condition for PLCP on larger alphabets similar to the relation
$s_{2n+1} + s_{2n} + s_n = 0$? And what would be apwenian sequences in this context?
Note that a relation of the same kind appears in \cite[p.~270]{Lauder1999}.

\item[ ]{$*$} Is the sequence of Hankel determinants of a $d$-automatic sequence a $d$-regular
sequence or, once reduced modulo some $k$, a $d$-automatic sequence? Some positive results
in this direction can be found in \cite{Hu-Wei-Han}.

\item[ ]{$*$} Can the previous question be addressed by studying in detail the way of transforming
a Stieltjes, or a Jacobi, or a Hankel, continued fraction into a usual continued fraction?

\item[ ]{$*$} The linear complexity profile of some automatic sequences was computed or evaluated
in \cite{Merai-Winterhof}. In particular, it is proved that several $q$-automatic sequences over 
${\mathbb F}_q$, which are thus somehow ``predictable'', have their linear complexity $L(n)$ of 
order of magnitude $n$. For example, {\it the linear complexity profile of the Thue-Morse sequence
is the sequence $(2 \lfloor \frac{n+2}{4}\rfloor)_{n \geq 1}$ and the linear complexity profile of the 
period-doubling sequence is the sequence $(\lfloor \frac{n+1}{2} \rfloor)_{n \geq 1}$} (by the way
this sequence is the only binary sequence having both perfect linear complexity profile and 
{\em perfect lattice profile}, see \cite{Dorfer-Meidl-Winterhof} for more details). 
For which other automatic or morphic sequences is it possible to compute exactly their linear 
complexity profile?

\item[ ]$(*)$ How does linear complexity of a sequence compare with other ``complexities''?
The reader can, in particular, consult the following three papers: \cite{Shallit} (where {\it inter alia} 
a measure called {\em rationality} that generalizes linear complexity is introduced), \cite{Diem} 
(where a discussion about terminology takes place at the end of the paper, namely about the terms
``perfect linear complexity profile'', ``almost perfect linear complexity profile'', ``$d$-almost perfect 
complexity'' and ``$d$-perfect''), and \cite{GMN}.

\end{itemize}

\noindent
{\bf Acknowledgments} We thank J. Shallit for having provided a proof of Theorem~\ref{shallit}.
The first author would like to thank A. Lasjaunias and J.-Y. Yao for interesting discussions.

\end{document}